\theoremstyle{thmstyleone}%
\newtheorem{theorem}{Theorem}
\newtheorem{corollary}[theorem]{Corollary}%
\newtheorem{lemma}[theorem]{Lemma}
\newtheorem{conjecture}[theorem]{Conjecture}%
\theoremstyle{thmstyletwo}%
\newtheorem{remark}{Remark}%
\theoremstyle{thmstylethree}%
\newtheorem{definition}{Definition}%
\newcommand{\setR}{\mathbb{R}}
\let\altphi\phi
\let\phi\varphi
\let\varphi\altphi
\let\altphi\undefined
\newcommand{\di}{\mathop{}\!\mathrm{d}}
\DeclareMathOperator{\RCD}{RCD}
\newfont{\tmpf}{cmsy10 scaled 2500}
\newcommand{\intav}{{\mathop{\int\kern-10pt\rotatebox{0}{\textbf{--}}}}}
\renewcommand{\ }{\text{ }}
\def\<{\langle}
\def\>{\rangle}
\newcommand{\R}{\setR}
\title{Poincar\'e inequality for one forms on four manifolds with bounded Ricci curvature}
\author{Shouhei Honda
\thanks{Graduate School of Mathematical Sciences, The University of Tokyo, shouhei@ms.u-tokyo.ac.jp
}
and Andrea Mondino
\thanks{Mathematical Institute, University of Oxford, andrea.mondino@maths.ox.ac.uk
}}
\begin{document}

\maketitle

\begin{abstract}
    In this short note, we provide a quantitative global Poincar\'e inequality for one forms on a closed Riemannian four manifold, in terms of an upper bound on the diameter, a positive lower bound on the volume, and a two-sided bound on Ricci curvature. This seems to be the first non-trivial result giving such an inequality without any higher curvature assumptions. The proof is based on a Hodge theoretic result on orbifolds, a comparison for fundamental groups, and a spectral convergence with respect to Gromov-Hausdorff convergence, via a degeneration result to orbifolds by Anderson.
\end{abstract}

\section{Introduction}
Let $M$ be a closed Riemannian $n$-manifold, 
let $\Delta^{H, k}$ be the Hodge Laplacian acting on $k$-forms on $M$ and denote by
\begin{equation*}
    0\le \lambda^{H, k}_0 \le \lambda^{H, k}_1 \le \lambda^{H, k}_2 \le \cdots \to \infty
\end{equation*}
the $i$-th eigenvalues $\lambda^{H, k}_i=\lambda_i^{H,k}(M)$ of $\Delta^{H, k}$ counted with their multiplicities. Let $\nu^{H, k}$ be the first positive eigenvalue, namely
\begin{equation*}
    \nu^{H, k}=\min \{\lambda^{H, k}_i | \lambda^{H, k}_i >0\}.
\end{equation*}
It is natural to ask whether there exists a quantitative positive lower bound on $\nu^{H, k}$ in terms of a certain restriction on curvature and sizes of $M$, for example its diameter $\mathrm{diam}=\mathrm{diam}(M)$ and its volume $\mathrm{vol}=\mathrm{vol}(M)$. 

In the case when $k=0$, namely, considering the (minus) Laplacian $-\Delta=-\mathrm{tr}(\mathrm{Hess})$ acting on functions, it is well-known 
that
\begin{equation}\label{function}
    \nu^{H, 0}=\lambda_1^{H, 0} \ge C(n, D, K)>0
\end{equation}
where $D$ is an upper bound on $\mathrm{diam}(M)$ and $K$ is a lower bound on Ricci curvature $\mathrm{Ric}$.

In the case when $k \ge 1$, Gallot-Meyer \cite{GM} proved a similar estimate under the assumption of a positive lower bound of the curvature operator. Moreover, Colbois-Courtois \cite{CC}  proved a lower bound on $\nu^{H,k}$ for every $k\geq 1$ assuming a 2-sided bound on the sectional curvature, an upper bound on the diameter, and a lower bound on the volume. 

Next let us focus on the case $k=1$. Recalling the Bochner formula for one forms:
\begin{equation}\label{bochneroneform}
\frac{1}{2}\Delta |\omega|^2 = |\nabla \omega|^2- \langle \Delta^{H, 1}\omega, \omega \rangle +\langle \mathrm{Ric}, \omega \otimes \omega \rangle, 
\end{equation}
it is natural to restrict Ricci curvature in order to get a quantitative lower estimate on $\nu^{H, 1}$. However, in authors' knowledge, there is no such a result, except for a trivial case, $\mathrm{Ric} \ge K>0$, which yields $\nu^{H, 1}\ge K$ just by integrating (\ref{bochneroneform}) over $M$ for an eigen-one-form $\omega$.

The main purpose of this note is to provide a positive result along this direction in dimension $4$, more precisely, we will show that 
\begin{equation*}
    \nu^{H, 1} \ge C(D, v)>0
\end{equation*}
under the assumptions: $n=4$, $\mathrm{diam} \le D<\infty$, $\mathrm{vol} \ge v>0$ and $|\mathrm{Ric}|\le 1$. See Theorem \ref{thm:positiveeigenvalue}.


\section{Main Result and Proof}

Let us start by recalling the definition and some basic properties of orbifolds, we refer the reader to \cite{Satake, Kleiner-Lott, Caramello} for a more comprehensive introduction.

A \emph{local model} is a pair $(\hat{U}, G)$,  where $\hat{U} \subset \R^n$ is a connected open subset and $G$ is a finite group  acting smoothly and effectively on $U$, on the right (\emph{effectiveness}
means that the homomorphism $G \to {\rm Diff}(\hat{U}_\alpha)$ is injective).

A \emph{smooth map} between local models  $(\hat{U}_1, G_1)$ and  $(\hat{U}_2, G_2)$ is the datum of  a smooth map $\hat{f}: \hat{U}_1\to \hat{U}_2$ and a homomorphism $\varphi:G_1\to G_2$ such that $\hat{f}$ is $\varphi$-equivariant, i.e. $\hat{f}(x g_1)=\hat{f}(x) \varphi(g_1)$. Let us explicitly point out that $\varphi$ is not assumed injective or surjective. An \emph{embedding} is a smooth map between local models so that $\hat{f}$ is an embedding; in this case, the effectiveness implies that $\varphi$ is injective.

\begin{definition}[Orbifold]
An \emph{atlas} for an $n$-dimensional orbifold $\mathcal{O}$ is the datum of:
\begin{itemize}
\item A Hausdorff paracompact space $|\mathcal O|$;
\item An open covering $\{U_\alpha\}$ for $|\mathcal O|$;
\item For each $U_\alpha$ there exists a local model $(\hat{U}_\alpha, G_\alpha)$ and a homeomorphism $\phi_\alpha: U_{\alpha}\to \hat{U}_\alpha/ G_\alpha$ with the following property: if $x\in U_1\cap U_2$, then there is a local model $(\hat{U}_3, G_3)$ with $x\in U_3$, where $U_3\subset U_1\cap U_2$ is homeomorphic to $\hat{U}_3/G_3$, together with local embeddings $(\hat{U}_3, G_3)\to (\hat{U}_1, G_1)$ and  $(\hat{U}_3, G_3)\to (\hat{U}_2, G_2)$.
 \end{itemize}
 Two atlases are equivalent if
they are both included in a third atlas. 
 
 An \emph{orbifold} $\mathcal{O}$ is an equivalence class of atlases.
 \end{definition}
 
 An orbifold $\mathcal{O}$ (with a given atlas) is \emph{oriented} if each $\hat{U}_\alpha$ is oriented, moreover the action of $G_\alpha$ as well as the embeddings $\hat{U}_3\to \hat{U}_1$ and $\hat{U}_3\to \hat{U}_2$   preserve the orientation.  The orbifold $\mathcal{O}$  is compact (resp. connected) if $|\mathcal O|$ is so.
 
 For a given  point $x\in |\mathcal O|$ and local model $(\hat{U}, G)$ around $x$, consider $\hat{x}\in \hat{U}$ which projects to $x$.  The \emph{local
group} $G_x$ is the stabilizer group $\{g\in G\colon \hat{x} g=\hat{x}\}$. It is always possible to find a local model so that $G=G_x$.  
 
 The regular part $|\mathcal{O}|_{reg}\subset |\mathcal O|$ is the subset of points $x\in  |\mathcal O|$ such that $G_x=\{1_G\}$. $|\mathcal{O}|_{reg}$ is a smooth manifold, moreover it is  open and  dense in  $|\mathcal O|$.
 
 A \emph{smooth map} (resp. $C^{k,\alpha}$-map) $f: \mathcal{O}_1\to \mathcal{O}_2$ between orbifolds is given by a continuous map $|f|:|\mathcal{O}_1|\to |\mathcal{O}_2|$  such that for each $x\in \mathcal{O}_1$ there are local models $(\hat{U}_1, G_1)$ and $(\hat{U}_2, G_2)$ around $x$ and $f(x)$, and a smooth  (resp. $C^{k,\alpha}$-map) $\hat{f}$ between the local models so that the diagram with the projections onto $U_1$ and $U_2$ commutes.
 
 \begin{remark}\label{c2alpha}
 One can define the \emph{tangent bundle} $T\mathcal{O}$ as a suitable orbifold, which coincides with the usual tangent bundle when restricted to the regular part (we refer to \cite{Kleiner-Lott} for the precise definitions). Similarly for the cotangent bundle $T^*\mathcal{O}$, and their tensor products. A section for such bundles  is said to be \emph{smooth} (resp. of class $C^{k,\alpha}$) if it is so as a map between orbifolds.
 In particular, we will use such a terminology in the proof of the main theorem, when saying that a Riemannian metric $g$ on $\mathcal{O}$ is of class $C^{1,\alpha}$.
\end{remark}

On an orbifold $\mathcal{O}$, one can define differential forms and study de Rham cohomology \cite{Satake} (see also \cite{Caramello} for an introduction). We denote by $H^k_{dR}(\mathcal{O})$ the $k^{th}$-de Rham cohomology group of $\mathcal{O}$.

The following lemma is  well-known to experts, we include it with a proof for the reader's convenience.

\begin{lemma}\label{coincidence}
Let $\mathcal{O}$ be an $n$-dimensional orientable orbifold. Then
 the  first Betti number $b_1(|\mathcal O|)$, defined as the rank over $\mathbb{Z}$ of the  fundamental group $\pi_1(|\mathcal O|)$, coincides with the rank over $\R$ of the first de Rham cohomology group $H^1_{dR}(\mathcal{O})$.
\end{lemma}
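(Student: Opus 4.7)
The plan is to combine the orbifold de Rham theorem with the classical topological identification of the first singular cohomology with the abelianization of the fundamental group.

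First I would invoke Satake's de Rham theorem for orbifolds \cite{Satake}: the cohomology of the complex of (smooth) orbifold differential forms is naturally isomorphic to the real singular cohomology of the underlying paracompact Hausdorff space, that is $H^k_{dR}(\mathcal{O})\cong H^k(|\mathcal{O}|;\R)$ for every $k\ge 0$. This is the only genuinely orbifold-specific input of the argument and it is the step I expect to be the main (formal) obstacle, because orbifold forms live on the local covers $\hat U_\alpha$ and one has to check that the usual sheaf-theoretic proof of de Rham (partition of unity, Mayer--Vietoris, Poincar\'e lemma) goes through on the quotients $\hat U_\alpha/G_\alpha$; since $G_\alpha$ is finite one can always average, which is the mechanism behind Satake's proof. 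In our application we only need $k=1$, so we get $H^1_{dR}(\mathcal{O})\cong H^1(|\mathcal{O}|;\R)$.

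Next I would apply two standard tools from algebraic topology on the topological space $|\mathcal{O}|$. By Hurewicz, the first integral singular homology group is the abelianization of the fundamental group,
\begin{equation*}
H_1(|\mathcal{O}|;\mathbb Z)\cong \pi_1(|\mathcal{O}|)^{\mathrm{ab}}.
\end{equation*}
By the universal coefficient theorem,
\begin{equation*}
H^1(|\mathcal{O}|;\R)\cong \mathrm{Hom}_{\mathbb Z}(H_1(|\mathcal{O}|;\mathbb Z),\R),
\end{equation*}
since $\R$ is torsion-free (no $\mathrm{Ext}$ term appears).

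Finally I would compare ranks. Because $|\mathcal O|$ is compact (this is the only place compactness enters, and one may want to state it explicitly or invoke finite generation from the atlas), $H_1(|\mathcal{O}|;\mathbb Z)$ is a finitely generated abelian group; write it as $\mathbb Z^r\oplus T$ with $T$ finite torsion. Then $\mathrm{Hom}_{\mathbb Z}(\mathbb Z^r\oplus T,\R)\cong \R^r$, so the $\R$-dimension of $H^1_{dR}(\mathcal{O})$ equals $r$, which is by definition the rank over $\mathbb Z$ of $\pi_1(|\mathcal{O}|)^{\mathrm{ab}}$, hence of $\pi_1(|\mathcal{O}|)$, i.e.\ $b_1(|\mathcal{O}|)$. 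Chaining the three identifications yields the claim. Note that orientability of $\mathcal{O}$ plays no role in this particular lemma; it is assumed presumably for use elsewhere (e.g.\ Hodge theory on $\mathcal{O}$).
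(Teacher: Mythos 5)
Your proof is correct, but it takes a genuinely different route through the topology than the paper does. Both arguments hinge on the same orbifold-specific input, Satake's de Rham theorem, but you apply it directly in degree $1$ to get $H^1_{dR}(\mathcal{O})\cong H^1(|\mathcal O|;\R)$ and then descend to $\pi_1$ via the universal coefficient theorem and Hurewicz. The paper instead applies the de Rham theorem in degree $n-1$ and uses Poincar\'e duality \emph{twice}: once for orbifold de Rham cohomology ($H^1_{dR}(\mathcal{O})=H^{n-1}_{dR}(\mathcal{O})$, \cite[Theorem 3]{Satake}) and once for real singular cohomology of $|\mathcal O|$ ($H^{n-1}(|\mathcal O|,\R)=H_1(|\mathcal O|,\R)$), landing in first homology with real coefficients and then invoking ${\rm Rk}_\R(H_1(X,\R))={\rm Rk}_{\mathbb Z}(\pi_1(X))$. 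Your route is more elementary: it needs neither orientability (as you correctly observe) nor the nontrivial fact that the underlying space of an oriented orbifold satisfies rational Poincar\'e duality, and it makes the passage from cohomology to the fundamental group explicit rather than quoting it as a ``general fact.'' The one place where the paper's route is slightly more robust is the final rank comparison: working with $H_1(|\mathcal O|,\R)\cong H_1(|\mathcal O|,\mathbb Z)\otimes\R$, the identity $\dim_\R(A\otimes\R)={\rm Rk}_{\mathbb Z}(A)$ holds for arbitrary abelian groups, whereas your identification ${\rm Hom}_{\mathbb Z}(H_1,\R)\cong\R^r$ genuinely uses finite generation, which you rightly flag as coming from compactness (and which is not actually hypothesized in the lemma as stated, though it holds in the paper's application). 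Either way the argument goes through in the setting where the lemma is used.
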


\begin{proof}
 By Poincar\'e duality for the de Rham cohomology of orbifolds \cite[Theorem 3]{Satake}, it holds that $H^1_{dR}(\mathcal{O})=H^{n-1}_{dR}(\mathcal{O})$.
 
The de Rham  Theorem for orbifolds \cite[Theorem 1]{Satake}, gives that $H^{n-1}_{dR}(\mathcal{O})=H^{n-1}(|\mathcal O|, \R)$, where the latter is the $(n-1)$-th real singular cohomology group. 

 By Poincar\'e duality with coefficients in $\R$ \cite{Satake}, it holds that the $(n-1)$-th real singular cohomology group and the $1^{st}$ real singular homology group coincide: $H^{n-1}(|\mathcal O|, \R)=H_1(|\mathcal O|, \R)$.

 Finally, It is a general fact that  ${\rm Rk}_{\R}(H_1(X,\R))={\rm Rk}_{\mathbb{Z}}(\pi_1(X))$ for any topological space $X$, where the latter is the rank over $\mathbb{Z}$ of the (classical) fundamental group.

\end{proof}

We are now in a position to prove the main result of the note. 
\begin{theorem}\label{thm:positiveeigenvalue}
Let $M$ be a closed Riemannian manifold of dimension at most $4$. If $|\mathrm{Ric}| \le 1$, $\mathrm{diam} \le D<\infty$ and $\mathrm{vol} \ge v>0$, then 
\begin{equation*}
\nu^{H, 1} \ge C(D, v)>0.
\end{equation*} 
Thus, in other words, the global Poincar\'e inequality:
\begin{equation}\label{Poincare}
\inf_{\eta}\int_M|\omega-\eta|^2\di \mathrm{vol} \le C(D, v)^{-1}\int_M\left( |\delta \omega|^2 +|d\omega|^2\right)\di \mathrm{vol}
\end{equation}
holds for any smooth $1$-form $\omega$ on $M$, where the infimum in the left-hand-side of (\ref{Poincare}) runs over all harmonic one forms $\eta$ on $M$.
\end{theorem}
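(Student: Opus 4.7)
The strategy is to argue by contradiction, combining Anderson's orbifold degeneration theorem for non-collapsed $4$-manifolds with two-sided Ricci bounds, an $L^2$-spectral convergence for the Hodge Laplacian on $1$-forms, Lemma \ref{coincidence}, and a fundamental-group surjection under such convergence. Suppose the claim fails: there is a sequence $(M_j)_{j\in\setN}$ of closed $n$-manifolds with $n\le 4$ satisfying $|\mathrm{Ric}_{M_j}|\le 1$, $\mathrm{diam}(M_j)\le D$, $\mathrm{vol}(M_j)\ge v$, and $\nu^{H,1}(M_j)\to 0$. Passing to a subsequence, Anderson's theorem yields a compact Riemannian orbifold $\mathcal{O}_\infty$ of dimension $n$, with finitely many singular points and a $C^{1,\alpha}$ Riemannian metric, such that $M_j\to \mathcal{O}_\infty$ in the Gromov--Hausdorff sense, with $C^{1,\alpha}$ convergence of the metrics on the regular part (for $n\le 3$ the limit is already a smooth manifold, and the argument simplifies accordingly).

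Under these hypotheses the first Betti numbers $b_1(M_j)$ are uniformly bounded (e.g.\ by Anderson's topological finiteness), so after a further subsequence $b_1(M_j)=b$ for all $j$. The next step is to show that the $i$-th positive eigenvalue of the Hodge Laplacian on $1$-forms is continuous along the convergence $M_j \to \mathcal{O}_\infty$, where on the orbifold $\Delta^{H,1}_{\mathcal{O}_\infty}$ acts on $1$-forms interpreted as $G_x$-invariant sections on smooth local covers. This gives $\lambda_i^{H,1}(M_j)\to \lambda_i^{H,1}(\mathcal{O}_\infty)$ for every $i$; combined with $\lambda_b^{H,1}(M_j)=\nu^{H,1}(M_j)\to 0$, this forces $\dim \ker \Delta^{H,1}_{\mathcal{O}_\infty}\ge b+1$. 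By orbifold Hodge theory this kernel coincides with $H^1_{dR}(\mathcal{O}_\infty)$, and by Lemma \ref{coincidence} its real dimension equals $b_1(|\mathcal{O}_\infty|)$.

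To close the contradiction one produces, for $j$ large, a surjection $\pi_1(M_j)\twoheadrightarrow \pi_1(|\mathcal{O}_\infty|)$, obtained by composing the stability of the orbifold fundamental group along non-collapsed bounded-Ricci convergence (in Anderson's framework) with the canonical surjection $\pi_1^{\mathrm{orb}}(\mathcal{O}_\infty)\twoheadrightarrow \pi_1(|\mathcal{O}_\infty|)$ that quotients out the local isotropy groups. Passing to abelianizations yields $b_1(|\mathcal{O}_\infty|)\le b_1(M_j)=b$, contradicting the previous paragraph's inequality $b_1(|\mathcal{O}_\infty|)\ge b+1$. The equivalent formulation \eqref{Poincare} then follows from the standard variational characterization of $\nu^{H,1}$ on the $L^2$-orthogonal complement of the space of harmonic $1$-forms, which computes the infimum on the left-hand side. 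I expect the main obstacle to lie in the spectral convergence step: one must glue eigen-$1$-forms on $\mathcal{O}_\infty$ to admissible competitors on $M_j$ (and vice versa) across the finitely many orbifold singularities, where the metric is only $C^{1,\alpha}$. The natural approach is a cutoff construction near each singular point based on $G_x$-invariant local representations, combined with elliptic regularity for $\delta d+d\delta$ on the regular part and the $C^{1,\alpha}$ convergence of the metrics to control the Dirichlet energy of the cutoff error.
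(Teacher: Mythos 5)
Your overall architecture coincides with the paper's: argue by contradiction, pass to Anderson's orbifold limit $\mathcal{O}$, use spectral convergence of the Hodge Laplacian on $1$-forms to force the strict jump $\limsup_i b_1(M_i)<b_1^H(\mathcal{O})$, bound $b_1^H(\mathcal{O})$ by $b_1(|\mathcal{O}|)$ via Lemma \ref{coincidence}, and contradict this with a surjection $\pi_1(M_i)\twoheadrightarrow\pi_1(|\mathcal{O}|)$. The two heavy steps you flag or improvise are handled in the paper by citation rather than by new arguments. The eigenvalue convergence $\lambda_j^{H,1}(M_i)\to\lambda_j^{H,1}(\mathcal{O})$ is precisely the main theorem of \cite{HondaSpectral}, so the cutoff-and-gluing construction you propose near the singular points is not needed (and carrying it out from scratch with only a $C^{1,\alpha}$ limit metric would be a substantial undertaking). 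The surjection on fundamental groups comes from \cite{SormaniWei}, which applies to any Gromov--Hausdorff convergent sequence with a uniform lower Ricci bound and yields, for large $i$, a surjection of $\pi_1(M_i)$ onto the revised fundamental group of the limit; the latter equals $\pi_1(|\mathcal{O}|)$ because orbifolds are semi-locally simply connected. Your alternative route --- ``stability of the orbifold fundamental group in Anderson's framework'' composed with the quotient $\pi_1^{\mathrm{orb}}(\mathcal{O})\twoheadrightarrow\pi_1(|\mathcal{O}|)$ --- is plausible in spirit (it amounts to a van Kampen argument using the identification of $M_i$, for large $i$, with a resolution of $|\mathcal{O}|$ by ALE pieces), but as stated it is an unproved assertion rather than an off-the-shelf theorem; you should either prove it or replace it by the Sormani--Wei citation.

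There is one genuine gap: orientability. Lemma \ref{coincidence} is stated (and proved, via Poincar\'e duality) only for orientable orbifolds, and your proposal applies it without comment. The paper first treats orientable $M$, invokes \cite{HondaOrientation} to conclude that the limit orbifold is orientable, and then reduces the non-orientable case to the orientable one by passing to the orientation double cover and projecting the harmonic minimizer back down; your argument needs the same (or some other) fix. The remaining discrepancies are harmless: your treatment of dimensions $n\le 3$ (smooth limit, simpler argument) is a legitimate alternative to the paper's reduction by taking a product with a flat torus, and your normalization $\lambda_b^{H,1}(M_i)=\nu^{H,1}(M_i)$ after extracting a subsequence with constant $b_1(M_i)=b$ is just a reformulation of the paper's inequality $\limsup_i b_1(M_i)<b_1^H(\mathcal{O})$.
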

\begin{proof}
Note that it is enough to consider the case when the dimension is $4$ because other cases can be reduced to the 4D case after taking the product of a lower dimensional flat torus (or just apply a result of \cite{CC}).

Firstly let us discuss the case when $M$ is orientable.
We will argue by contradiction.  If it is not the case, then there exist a sequence of closed oriented Riemannian $4$-manifolds $M_i$ with $|\mathrm{Ric}| \le 1$, $\mathrm{diam} \le D<\infty$ and $\mathrm{vol}\ge v>0$ such that 
\begin{equation}\label{positiveeigenvalue}
\nu^{H, 1}(M_i) \to 0.
\end{equation}

It follows from \cite{Anderson, Anderson2, CheegerColding1} with \cite{CN} that after passing to a subsequence, there exists a compact Riemannian $4$-orbifold $\mathcal{O}$ endowed with a Riemannian metric $g \in W^{2,p}$ for any $p<\infty$ (thus, in particular, $g\in C^{1, \alpha}$ for any $\alpha<1$), such that $M_i$ measured Gromov-Hausdorff (mGH) converge to $\mathcal{O}$ and the limit measure is the Hausdorff measure of dimension $4$.  
Note that  $\mathcal{O}$ is orientable by \cite{HondaOrientation}.
From \cite{HondaSpectral}, the spectral convergence for the Hodge Laplacian acting on $1$-forms holds for the convergence $M_i \to \mathcal{O}$, namely $\lambda^{H, 1}_j(M_i) \to \lambda^{H, 1}_j( \mathcal{O})$ for any $j$.  
In particular
\begin{equation*}
\limsup_{i \to \infty}b_1(M_i)\le b_1^H(\mathcal{O}),
\end{equation*}
where $b^H_1(\mathcal O)$ denotes the dimension of the space of harmonic one forms on $\mathcal O$.
Moreover the inequality must be strict:
\begin{equation*}
\limsup_{i \to \infty}b_1(M_i)< b_1^H(\mathcal{O}),
\end{equation*}
because of (\ref{positiveeigenvalue}).
On the other hand,  applying \cite[Theorem 1.1]{SormaniWei} for any sufficiently large $i$, there exists a surjective homomorphism from $\pi_1(M_i)$ to $\pi_1(|\mathcal O|)$; here we used that the revised fundamental group (used in \cite{SormaniWei})  of an orbifold coincides with the classical fundamental group, since every orbifold is semi-locally simply connected (as each point admits a contractible neighbourhood). In particular, considering their ranks, we have
\begin{equation*}
\liminf_{i \to \infty}b_1(M_i) \ge b_1(|\mathcal O|).
\end{equation*}
Since, by Hodge theory, linearly independent harmonic 1-forms on $\mathcal O$ generate different classes in $H^1_{dR}(\mathcal O)$, 
Lemma \ref{coincidence} yields
\begin{equation}\label{harmonicbetti}
    b_1^H(\mathcal O) \le b_1(|\mathcal O|).
\end{equation}
As a summary
\begin{equation*}
\limsup_{i \to \infty}b_1(M_i) < b_1^H(\mathcal O)\le b_1(|\mathcal O|)\le \liminf_{i \to \infty}b_1(M_i),
\end{equation*}
which is a contradiction.

Finally, let us discuss the case when $M$ is non-orientable.
Take the two-sheeted orientable Riemannian covering $\pi:\tilde M \to M$. Note that any harmonic one form on $\tilde M$ comes from the pull-back of a harmonic one form on $M$ because of $b_1(\tilde M)=b_1(M)$. Thus applying (\ref{Poincare}) to $\tilde M$ completes the proof.
\end{proof}
As an immediate consequence of the proof above we obtain the following.
\begin{corollary}\label{b1convergence}
    Let $M_i$ be a sequence of Riemannian $4$-manifolds with $|\mathrm{Ric}| \le 1$, Gromov-Hausdorff converging to a $4$-dimensional $C^{1, \alpha}$-Riemannian orbifold $\mathcal{O}$. Then for any sufficiently large $i$:
    \begin{equation*}
        b_1(M_i)=b_1^H(\mathcal{O})=b_1(|\mathcal{O}|)=b_1(|\mathcal{O}|_{reg}).
    \end{equation*}
\end{corollary}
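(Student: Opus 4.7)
The plan is to observe that the entire chain of inequalities between $b_1(M_i)$, $b_1^H(\mathcal O)$, and $b_1(|\mathcal O|)$ established en route to the contradiction in the proof of Theorem \ref{thm:positiveeigenvalue} does not really rely on the hypothesis $\nu^{H,1}(M_i)\to 0$: it holds as soon as one has Gromov-Hausdorff convergence $M_i\to\mathcal O$ to an orbifold limit as in the statement. Combining it with the trivial bound $\liminf\le\limsup$ will collapse all three quantities to a single common value, and the final equality $b_1(|\mathcal O|)=b_1(|\mathcal O|_{\rm reg})$ will follow from a Mayer-Vietoris computation that exploits the fact that, in dimension $4$, the singular set of the limit orbifold consists of finitely many isolated points with spherical links.

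Concretely, we would first repeat verbatim the three ingredients already used in the proof of Theorem \ref{thm:positiveeigenvalue}: the spectral convergence of the Hodge Laplacian on $1$-forms \cite{HondaSpectral} gives $\limsup_i b_1(M_i)\le b_1^H(\mathcal O)$; Hodge theory on $\mathcal O$ combined with Lemma \ref{coincidence} gives $b_1^H(\mathcal O)\le b_1(|\mathcal O|)$; and the Sormani-Wei surjection $\pi_1(M_i)\twoheadrightarrow\pi_1(|\mathcal O|)$ \cite{SormaniWei} gives $b_1(|\mathcal O|)\le\liminf_i b_1(M_i)$. Chaining these with $\liminf\le\limsup$ forces all inequalities to be equalities, and since $b_1(M_i)$ are non-negative integers, the sequence is eventually constant and equal to the common value $b_1^H(\mathcal O)=b_1(|\mathcal O|)$.

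To handle the final equality $b_1(|\mathcal O|)=b_1(|\mathcal O|_{\rm reg})$, we would invoke Anderson's description of the limit: in the $4$-dimensional setting $\mathcal O$ has only finitely many isolated singular points $p_1,\dots,p_k$, each admitting a neighborhood homeomorphic to the cone on $S^3/\Gamma_j$ with $\Gamma_j\le SO(4)$ finite and acting freely on $S^3$. Since $\Gamma_j$ is finite, $S^3/\Gamma_j$ is a rational homology $3$-sphere, so its first real Betti number vanishes. We would then apply Mayer-Vietoris with real coefficients to the open cover $|\mathcal O|=U\cup V$, where $U$ is a disjoint union of small open cone neighborhoods of the $p_j$ (each contractible) and $V=|\mathcal O|_{\rm reg}$; the intersection $U\cap V$ deformation retracts onto $\bigsqcup_j S^3/\Gamma_j$, whose first real homology vanishes. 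The relevant piece of the long exact sequence then yields $H_1(V;\setR)\cong H_1(|\mathcal O|;\setR)$.

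The only genuine obstacle lies in this last topological step, specifically in verifying that the connecting homomorphism $H_1(|\mathcal O|;\setR)\to H_0(U\cap V;\setR)$ vanishes; this reduces to showing that $H_0(U\cap V;\setR)\to H_0(U;\setR)\oplus H_0(V;\setR)$ is injective, which is immediate because each component of $U\cap V$ sits inside a unique cone component of $U$. Modulo this elementary check, the argument is routine given the three analytic/geometric inputs already used in Theorem \ref{thm:positiveeigenvalue} together with Anderson's structure theorem for the singular set.
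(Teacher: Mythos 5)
Your proposal is correct and follows essentially the same route as the paper, which derives the corollary as an immediate consequence of the chain $\limsup_i b_1(M_i)\le b_1^H(\mathcal O)\le b_1(|\mathcal O|)\le\liminf_i b_1(M_i)$ established in the proof of Theorem \ref{thm:positiveeigenvalue}, noting that none of those three inequalities uses the contradiction hypothesis. The paper leaves the final equality $b_1(|\mathcal O|)=b_1(|\mathcal O|_{\rm reg})$ implicit, and your Mayer--Vietoris argument (using that the singular points are isolated with links $S^3/\Gamma_j$ that are rational homology spheres, and that $H_0(U\cap V;\setR)\to H_0(U;\setR)\oplus H_0(V;\setR)$ is injective) is a correct and complete way to supply it.
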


Let us make some small observation on other related spectral gap results. Let $\mathbb{R}^n/\mathbb{Z}^n$ be  the $n$-torus with the canonical flat Riemannian metric $g$. Then one can easily find a smooth family of Riemannian metrics $g_{\epsilon}$ on $\mathbb{R}^n/\mathbb{Z}^n$ satisfying that $g_{\epsilon}$ has no parallel vector fields and that $g_{\epsilon}$ converge smoothly to $g$ as $\epsilon \to 0$. This observation tells that in general, for any dimension,  under any negative lower bound and any positive upper bound on any curvature, there is no quantitative spectral gap for the connection Laplacian $\nabla^*\nabla$ acting on one forms.

On the other hand, in the case when  the Ricci curvature is non-negative, any harmonic one form must be parallel because of (\ref{bochneroneform}). Combining this observation with the trivial point-wise inequality:
\begin{equation*}
    |d\omega| + |\delta \omega| \le C |\nabla \omega|
\end{equation*}
for any smooth one form $\omega$, we obtain the following.
\begin{corollary}\label{spectralgapconnection}
    Let $M$ be a closed Riemannian manifold of dimension at most $4$ with $0 \le \mathrm{Ric} \le 1$, $\mathrm{diam} \le D<\infty$ and $\mathrm{vol} \ge v>0$. Then the first positive eigenvalue, denoted by $\nu^{C, 1}$, of $\nabla^*\nabla$ acting on one forms satisfies
    \begin{equation*}
        \nu^{C, 1} \ge C(D, v)>0.
    \end{equation*}
\end{corollary}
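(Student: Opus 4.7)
The plan is to reduce the assertion about $\nu^{C,1}$ to the estimate on $\nu^{H,1}$ already furnished by Theorem \ref{thm:positiveeigenvalue}, exploiting the fact that under $\mathrm{Ric}\ge 0$ the Bochner formula forces the kernel of $\nabla^*\nabla$ to coincide with the kernel of $\Delta^{H,1}$. Concretely, integrating (\ref{bochneroneform}) against a harmonic $1$-form $\eta$ yields $\int_M|\nabla\eta|^2 + \int_M\langle\mathrm{Ric},\eta\otimes\eta\rangle=0$, so $\mathrm{Ric}\ge 0$ implies $\nabla\eta=0$. Conversely, every parallel form is harmonic, and every parallel form lies in $\ker(\nabla^*\nabla)$; the trivial identity $\int_M\langle\nabla^*\nabla\omega,\omega\rangle=\int_M|\nabla\omega|^2$ combined with Bochner shows the reverse inclusion. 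Hence, in the setting of the corollary, $\ker(\nabla^*\nabla) = \{\text{harmonic $1$-forms}\}$.

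Next I would pick a nonzero eigenform $\omega$ of $\nabla^*\nabla$ with eigenvalue $\mu>0$ and observe that $\omega$ is $L^2$-orthogonal to this common kernel: for any harmonic (hence parallel) $\eta$,
\begin{equation*}
\mu\int_M\langle\omega,\eta\rangle\,\di\mathrm{vol}=\int_M\langle\nabla^*\nabla\omega,\eta\rangle\,\di\mathrm{vol}=\int_M\langle\nabla\omega,\nabla\eta\rangle\,\di\mathrm{vol}=0.
\end{equation*}
Since $0\le\mathrm{Ric}\le 1$ implies $|\mathrm{Ric}|\le 1$, Theorem \ref{thm:positiveeigenvalue} applies, and by the orthogonality just established the infimum on the left-hand side of (\ref{Poincare}) is attained at $\eta=0$. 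This gives
\begin{equation*}
\int_M|\omega|^2\,\di\mathrm{vol}\le C(D,v)^{-1}\int_M\bigl(|\delta\omega|^2+|d\omega|^2\bigr)\,\di\mathrm{vol}.
\end{equation*}

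Finally, I would invoke the purely algebraic pointwise bound $|d\omega|^2+|\delta\omega|^2\le C(n)|\nabla\omega|^2$ (valid because $d\omega$ is the antisymmetrization of $\nabla\omega$ and $\delta\omega$ its trace), together with $\int_M|\nabla\omega|^2\,\di\mathrm{vol}=\mu\int_M|\omega|^2\,\di\mathrm{vol}$, to conclude
\begin{equation*}
\int_M|\omega|^2\,\di\mathrm{vol}\le\frac{C(n)}{C(D,v)}\,\mu\int_M|\omega|^2\,\di\mathrm{vol},
\end{equation*}
which gives $\mu\ge C(D,v)/C(n)$, i.e.\ the desired quantitative spectral gap for $\nabla^*\nabla$. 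There is essentially no hard step here once Theorem \ref{thm:positiveeigenvalue} is in hand; the only point that deserves care is the verification that the kernel of $\nabla^*\nabla$ equals the space of harmonic $1$-forms (so that the orthogonality of the eigenform $\omega$ places it in the regime where the Poincar\'e inequality gives information), and this is immediate from Bochner under $\mathrm{Ric}\ge 0$.
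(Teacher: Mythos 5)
Your proposal is correct and follows essentially the same route the paper sketches: under $\mathrm{Ric}\ge 0$ the integrated Bochner formula identifies harmonic $1$-forms with parallel ones (i.e.\ with $\ker(\nabla^*\nabla)$), and then the pointwise bound $|d\omega|+|\delta\omega|\le C|\nabla\omega|$ transfers the Poincar\'e inequality of Theorem \ref{thm:positiveeigenvalue} into the claimed gap for $\nabla^*\nabla$. Your write-up merely fills in the details (orthogonality of a positive eigenform to the common kernel, so that the infimum in \eqref{Poincare} is attained at $\eta=0$) that the paper leaves implicit.
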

Note that thanks to the spectral convergence results established in \cite{HondaSpectral}, we can also give the corresponding results for limit spaces.

Finally let us conclude the note by proposing a (maybe well-known for experts) conjecture. Notice that, in sharp contrast with the case of functions \eqref{function},   it is not possible to remove the lower bound on the volume, see \cite{CC}. 
\begin{conjecture}
    Let $M$ be a closed Riemannian $n$-manifold with $\mathrm{Ric} \ge -1$, $\mathrm{diam} \le D<\infty$ and $\mathrm{vol}\ge v>0$. Then
    $$
    \nu^{H, 1}\ge C(n, D, v)>0.
    $$
\end{conjecture}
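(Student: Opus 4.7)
The plan is to attempt to adapt the contradiction argument of Theorem \ref{thm:positiveeigenvalue} to arbitrary dimension with only a lower Ricci bound, working in the broader setting of non-collapsed $\RCD$ spaces. Assume for contradiction a sequence of closed Riemannian $n$-manifolds $M_i$ satisfying the hypotheses with $\nu^{H,1}(M_i)\to 0$. By the compactness and stability theory for $\RCD$ spaces together with the non-collapsing hypothesis $\mathrm{vol}(M_i)\ge v$, after passing to a subsequence the $M_i$ converge in the measured Gromov--Hausdorff sense to a non-collapsed $\RCD(-(n-1),n)$ space $(X,\dist,\haus^n)$. In contrast with the four-dimensional two-sided Ricci case of Anderson, one cannot expect $X$ to be an orbifold: the regularity theory only produces a metric measure space with a potentially intricate singular stratification.

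For the spectral side, one would need a spectral convergence theorem for the Hodge Laplacian on $1$-forms along non-collapsed mGH convergence under a lower Ricci bound. The Gigli--Honda framework of $L^2$-differential forms on $\RCD$ spaces, the Bochner inequality for $1$-forms, and emerging Hodge theory in the non-collapsed singular setting should yield $\lambda_j^{H,1}(M_i)\to \lambda_j^{H,1}(X)$ and a well-defined harmonic Betti number $b_1^H(X)$. The assumption $\nu^{H,1}(M_i)\to 0$ then forces $\limsup_i b_1(M_i) < b_1^H(X)$, exactly as in the four-dimensional proof.

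For the topological side, one would invoke a Sormani--Wei style surjectivity $\pi_1(M_i)\twoheadrightarrow \pi_1(X)$ for $i$ large, which requires semi-local simple connectedness of $X$; this is available for non-collapsed $\RCD$ spaces thanks to recent work on the topology of $\RCD$ fundamental groups. Combined with a harmonic-to-topological inequality $b_1^H(X)\le b_1(X)$, one would obtain $\limsup_i b_1(M_i) < b_1^H(X)\le b_1(X)\le \liminf_i b_1(M_i)$, a contradiction.

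The principal obstacle is the harmonic-to-topological inequality $b_1^H(X)\le b_1(X)$. On smooth manifolds and on orbifolds (via Satake) this is a consequence of the de Rham theorem together with Poincar\'e duality, neither of which is available in full generality on a non-collapsed $\RCD$ space. The natural strategy is to associate to each nonzero harmonic $1$-form a period homomorphism $\pi_1(X)\to \R$ by integration along loops supported in the regular set $X_{\rm reg}$, and to verify that linearly independent harmonic forms produce linearly independent homomorphisms. The small codimension of the singular set in non-collapsed $\RCD$ spaces, together with capacity-type arguments allowing one to approximate arbitrary loops by loops in $X_{\rm reg}$, seem to be the relevant analytic ingredients; making this rigorous in full generality is, I expect, where substantially new input is needed beyond the four-dimensional orbifold setting of the present note.
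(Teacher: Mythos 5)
The statement you are addressing is presented in the paper as a \emph{conjecture}: the authors give no proof of it, and they propose it precisely because the method of Theorem \ref{thm:positiveeigenvalue} does not extend beyond dimension four with a two-sided Ricci bound. So there is no proof in the paper to compare against, and your text should be judged as a research plan rather than as a proof. Read that way, it is a sensible transposition of the contradiction argument to the non-collapsed $\RCD$ setting (which the authors themselves hint at in the closing sentence of the note), but it is not a proof: at least two of its steps are missing ingredients rather than available tools.

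First, the spectral convergence $\lambda_j^{H,1}(M_i)\to\lambda_j^{H,1}(X)$ that you invoke is established in \cite{HondaSpectral} only under a \emph{two-sided} Ricci bound, which is also what forces the limit to be a $C^{1,\alpha}$ orbifold via Anderson's theory. Under a mere lower bound $\mathrm{Ric}\ge -1$, neither the discreteness of the spectrum of a Hodge Laplacian on $1$-forms on the non-collapsed limit $(X,\dist,\haus^n)$ nor the continuity of its eigenvalues along the approximating sequence is known; without this, the key strict inequality $\limsup_i b_1(M_i)<b_1^H(X)$ (which in the paper is exactly where the assumption $\nu^{H,1}(M_i)\to 0$ enters) has no justification. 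Presenting this as ``emerging Hodge theory \dots\ should yield'' conceals that this is one of the two central open problems, not a technical detail. Second, as you yourself correctly flag, the harmonic-to-topological inequality $b_1^H(X)\le b_1(X)$ --- which in the paper is Lemma \ref{coincidence} and rests on Satake's de Rham theorem and Poincar\'e duality for orbifolds --- has no known analogue on a general non-collapsed $\RCD$ space; the period-map strategy you sketch would additionally require a restriction theory making $W^{1,2}$ harmonic $1$-forms integrable along a sufficiently rich family of loops in the regular set, which is not currently available. The honest summary is that your write-up is a reasonable account of \emph{why} the conjecture is open and of what would need to be developed to attack it, but it does not constitute a proof.
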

It is also natural to ask the same question for non-collapsed  $\RCD$ spaces. 

\smallskip\noindent
\textbf{Acknowledgement.}
This work was conducted at the The Erwin Schrödinger International Institute for Mathematics and Physics (ESI), where both the authors were partially supported to participate in the workshop, Synthetic Curvature Bounds for Non-Smooth Spaces: Beyond Finite Dimension. We would like to express our gratitude to the organizers of the workshop and to the ESI for providing an excellent working environment.

The first named author acknowledges support of the Grant-in-Aid for Scientific Research (B) of 20H01799, the
Grant-in-Aid for Scientific Research (B) of 21H00977 and Grant-in-Aid for Transformative
Research Areas (A) of 22H05105.

The second named author acknowledges support by the European Research Council (ERC), under the European Union Horizon 2020 research and innovation programme, via the ERC Starting Grant  “CURVATURE”, grant agreement No. 802689.

\bibliography{sn-bibliography}

\begin{thebibliography}{GMS13}

  
	  
\bibitem[A89]{Anderson}
	\textsc{M. Anderson}:
	\textit{Ricci curvature bounds and Einstein metrics on compact manifolds},
	  J. Amer. Math. Soc. \textbf{2} no. 3. (1989), 455–490.

   \bibitem[A90]{Anderson2}
	\textsc{M. Anderson}:
	\textit{Convergence and rigidity of manifolds under Ricci curvature
bounds},
	  Invent. Math. \textbf{102}(2) (1990), 429-455.

\bibitem[C21]{Caramello}
\textsc{F. Caramello}:
\textit{Introduction to orbifolds},
RiMa editora, São Carlos, (2021). Available at ArXiv:1909.08699v6.

    \bibitem[CC97]{CheegerColding1}
          \textsc{J. Cheeger, T. H. Colding}:
          \textit{On the structure of spaces with Ricci curvature bounded below. I.},
         J. Differential Geom. \textbf{46} (1997), 406--480.

   \bibitem[CN15]{CN}
        \textsc{J. Cheeger, A. Naber}:
        \textit{Regularity of Einstein manifolds and the codimension $4$
 conjecture},
 Ann. Math. \textbf{182} (2015), 1093-1165. 

 \bibitem[CC90]{CC}
        \textsc{B. Colbois, G. Courtois}:
        \textit{A note on the first nonzero eigenvalue of the Laplacian acting on $p$-forms},
 Manuscripta Math. \textbf{68} (1990), 143-160. 

\bibitem[KL14]{Kleiner-Lott}
\textsc{B. Kleiner, J. Lott}:
\textit{Geometrization of three-dimensional orbifolds via Ricci flow}
Local collapsing, orbifolds, and geometrization,  Ast\'erisque, \textbf{365} (2014),  101--177. 

\bibitem[GM73]{GM}
        \textsc{S. Gallot, D. Meyer}:
        \textit{Sur la premi\`ere valeur propre du p-spectre pour les vari\'et\'es \`a op\'erateur de
courbure positif}, 
 C. R. Acad. Sci. Paris S\'er. A-B \textbf{276} (1973), A1619–A1621 (French).

   
	  
	  \bibitem[H17a]{HondaSpectral}
        \textsc{S. Honda}:
        \textit{Spectral convergence under bounded Ricci curvature},
 J. Funct. Anal. \textbf{273} (2017), 1577-1662. 
 
 \bibitem[H17b]{HondaOrientation}
        \textsc{S. Honda}:
        \textit{Ricci curvature and orientability},
 Calc. Var. PDEs. \textbf{56} (2017), no. 6, Paper No.
174. 



\bibitem[S56]{Satake}
\textsc{I. Satake}:
        \textit{On a generalization of the notion of manifold},
 Proc. Natl. Acad. Sci. USA \textbf{42} (1956), no. 6, 359-363. 

 \bibitem[SW01]{SormaniWei}
        \textsc{C. Sormani, G. Wei }:
        \textit{Hausdorff convergence and universal covers},
Trans. Amer. Math. Soc. \textbf{353} no. 9 (2001), 3585-3602.

	
	

	
	
	
	
\end{thebibliography}

\end{document}